\newtheorem{theorem}{Theorem}[section]
\newtheorem{lemma}[theorem]{Lemma}
\theoremstyle{definition}
\newtheorem{definition}[theorem]{Definition}
\newtheorem{claim}[]{\indent Claim}[section]
\newtheorem{conjecture}[theorem]{Conjecture}
\newtheorem{case}{\indent Case}[section]
\newtheorem{subcase}{\indent Subcase}
\newcounter{tbox}
\begin{document}
\title{\bf Longest  cycles and  longest chordless cycles  in $2$-connected  graphs}
\author[1]{Yanan Hu\footnote{Email: hg@sit.edu.cn.}}
	\author[2]{Chengli Li\footnote{Email: lichengli0130@126.com.}}
	\author[2]{Feng Liu\footnote{Email: fengl69@stu.ecnu.edu.cn.}}

	\affil[1]{ School of Science, Shanghai Institute of Technology, Shanghai 201418, China}
	\affil[2]{ Department of Mathematics,
		East China Normal University, Shanghai, 200241, China}
\date{}
\maketitle

\begin{abstract}
Thomassen's chord conjecture from 1976 states that every longest cycle in a $3$-connected graph has a chord. The circumference $c(G)$ and induced circumference $c'(G)$ of a graph $G$ are the length of its longest cycles and the length of its longest chordless cycles, respectively. In $2017$, Harvey proposed a stronger conjecture: 
Every $2$-connected graph $G$ with minimum degree at least $3$  has $c(G)\geq c'(G)+2$. This conjecture implies Thomassen's chord conjecture.
We observe that wheels  are the unique hamiltonian graphs for which the circumference and the induced circumference differ by exactly one. Thus we need only consider non-hamiltonian graphs for Harvey's conjecture. In this paper, we propose a conjecture involving wheels that is equivalent to Harvey's conjecture on non-hamiltonian graphs.  A graph is $\ell$-holed if its all holes have length exactly $\ell$. Furthermore, we prove that Harvey's conjecture holds for $\ell$-holed graphs and graphs  with a small induced circumference. Consequently, Thomassen's conjecture also holds for this two classes of graphs.

\smallskip
\noindent{\bf2020 Mathematics Subject Classification:} 05C35; 05C38; 05C40

\smallskip
\noindent{\bf Keywords:} Longest cycle, longest chordless cycle, $\ell$-holed, wheels
\end{abstract}

\section{Introduction}
We consider finite simple graphs and follow the books \cite{West,B-M} for terminology and notations. The {\it order} of a graph is its number of vertices.   Given a graph $G$, we denote by $V(G)$ and $E(G)$ the vertex set and edge set of a graph $G,$ respectively. Denote by $N_{G}(v)$ the neighborhood in $G$ of a vertex $v.$ The {\it join} of  graphs $G$ and $H$, denote by $G\vee H$, is the graph obtained from the disjoint union $G+H$ by adding  edges to join every vertex in $G$  and every vertex in $H.$ 	For an edge $e$ of $G,$ the graph $G/e$ is obtained by contracting the edge $e$ in $G.$  A {\it wheel} is a graph formed by a cycle (called the rim) joining a new vertex (called the center). A {\it hole} in a graph is an induced cycle of length at least $4.$ A graph is {\it $\ell$-holed} if all its holes have length exactly $\ell.$

Thomassen's famous chord conjecture from 1976 is as follows \cite{B-R-A1985,T1989}.

\begin{conjecture}[{\hspace{-0.05em}Thomassen, 1976}] \label{Thomassen-conj}

Every longest cycle in a $3$-connected graph has a chord.
\end{conjecture}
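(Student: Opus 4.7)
The plan is to attack Thomassen's conjecture through the stronger conjecture of Harvey mentioned in the abstract. I would first observe that a hypothetical counterexample to Thomassen's conjecture is a $3$-connected graph $G$ with a longest cycle $C$ that is chordless, giving $c'(G)\geq |C|=c(G)$. Since any $3$-connected graph is $2$-connected and has minimum degree at least $3$, Harvey's inequality $c(G)\geq c'(G)+2$ would then yield $c(G)\geq c(G)+2$, a contradiction. Thus Thomassen's conjecture is implied by Harvey's, and I would concentrate on proving the latter.

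For Harvey's conjecture, I would split into the Hamiltonian and non-Hamiltonian cases. In the Hamiltonian case, the minimum degree condition $\delta(G)\geq 3$ forces any Hamilton cycle to have a chord, so $c'(G)\leq c(G)-1$; the remaining task is to exclude $c'(G)=c(G)-1$ except when $G$ is a wheel, which the abstract indicates is precisely the exceptional family. In the non-Hamiltonian case, my approach would fix a longest cycle $C$ of length $c(G)$ and a longest hole $D$ of length $c'(G)$, and attempt to assemble a cycle strictly longer than $|D|+1$ by re-routing $D$ along chord-free ears emanating from $V(G)\setminus V(D)$. The hope would be that whenever $|D|\geq |C|-1$, the $2$-connectivity together with $\delta(G)\geq 3$ force enough independent ears to enlarge $D$ by at least two vertices while preserving the status of the resulting cycle.

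The principal obstacle, and the reason this conjecture has stood open since $1976$, is precisely the general non-Hamiltonian case: extending a chordless cycle in a manner that strictly increases the length, yet does not re-introduce unwanted chords, is very delicate. A realistic strategy, and indeed the one the present paper pursues, is to restrict to structured subclasses such as $\ell$-holed graphs or graphs of small induced circumference, where uniform control on the holes propagates local information near $C$ strongly enough to force the required two-unit gap; the full conjecture I would leave as a long-term target approached incrementally through such subclasses.
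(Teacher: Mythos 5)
There is a genuine gap here, though it is one you openly acknowledge: the statement you were asked to prove is Thomassen's conjecture, which is an open problem, and the paper does not prove it either --- it only records it as Conjecture~\ref{Thomassen-conj} and establishes Harvey-type bounds for two restricted classes ($\ell$-holed graphs and graphs with $c'(G)\le 6$). Your first paragraph is correct and matches the paper's own observation: if a longest cycle $C$ in a $3$-connected $G$ were chordless, then $c'(G)\ge c(G)$, while Harvey's conjecture with $k=2$ (applicable since $G$ is $2$-connected with $\delta(G)\ge 3$) would give $c'(G)\le c(G)-2$, a contradiction. So the reduction of Thomassen's conjecture to Harvey's is sound. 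Your Hamiltonian-case remark is also fine: $\delta(G)\ge 3$ forces a chord on any Hamilton cycle, and the paper's Section~2 shows that the extremal case $c(G)=c'(G)+1$ for Hamiltonian graphs occurs exactly for wheels.

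The gap is everything after that. The non-Hamiltonian case --- rerouting a longest hole $D$ through ears from $V(G)\setminus V(D)$ to gain two units of length --- is stated as a hope, not carried out, and it is precisely where the conjecture has resisted proof since 1976. Nothing in your proposal supplies an unconditional argument, so as a proof of the stated conjecture it does not close. What you have written is an accurate description of the paper's programme (an equivalent reformulation via wheels, plus verification on structured subclasses using the Cook et al.\ characterization of $\ell$-holed graphs and a case analysis for $c'(G)\le 6$), but neither you nor the paper proves Conjecture~\ref{Thomassen-conj} itself.
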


Although this beautiful conjecture currently remains unsolved,  there are many partial results for which Conjecture~\ref{Thomassen-conj} has been proved. See \cite{Tclc1997,Tclc2018,EB2008,KJC2007,XC2003,XC2003klein,JHH2014,CQZ1987} for some previous and recent progress on this conjecture.

\begin{definition}
    \textup{The circumference of a graph $G$, denoted by $c(G)$, is the length of longest cycle in $G$. The induced circumference of a graph $G$, denoted by $c'(G)$, is the length of longest induced cycle in $G$. }
 \end{definition}
	 It is obvious that $c'(G)\leq c(G)$. We denote by $\delta(G)$ the minimum degree of graph $G.$ In \cite{Harvey}, Harvey posed the following conjecture.

\begin{conjecture}[{\hspace{-0.05em}Harvey, 2017}] \label{Conj2}

Let $k\ge 1$ be an integer. If $G$ is a $2$-connected graph with $\delta(G)\ge \lceil k/2+2\rceil$, then $c'(G)\leq c(G)-k$.
\end{conjecture}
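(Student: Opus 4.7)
The plan is to fix $k\ge 1$ and argue by induction on $|V(G)|$, taking a minimum $2$-connected counterexample $G$ with $\delta(G)\ge\lceil k/2+2\rceil$ and $c(G)-c'(G)<k$. I would first dispatch the hamiltonian case. If $G$ is hamiltonian then $c(G)=|V(G)|$; a classification of hamiltonian graphs with $c-c'$ small, anticipating the abstract's claim that wheels are the only hamiltonian graphs with $c-c'=1$, should rule out hamiltonian counterexamples for $k\ge 2$, while $k=1$ coincides with a chord-type assertion for which wheels are the tight examples (the center has large degree but the rim vertices only have degree $3$).

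The heart of the argument is the non-hamiltonian case. Choose a longest cycle $C$ in $G$ and a vertex $x\notin V(C)$. By $2$-connectivity and Menger's theorem, $x$ is joined to $V(C)$ by two internally disjoint paths $P_1,P_2$ with distinct endpoints $u_1,u_2$ on $C$. The central combinatorial object is the pair $(C,\ P_1\cup P_2\cup\{x\})$ together with the chord structure of $C$. Each chord of $C$ partitions $C$ into two arcs; combining an appropriately chosen arc with the paths $P_1,P_2$ should yield either a cycle strictly longer than $C$ (contradicting the choice of $C$), or an induced cycle of length at least $|C|-k+1$ (contradicting $c(G)-c'(G)<k$). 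The minimum-degree bound $\delta(G)\ge\lceil k/2+2\rceil$ supplies the branching needed to force one of these two outcomes.

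To promote this local analysis into a clean induction I would try to select an edge $e$, preferably lying inside a longest hole but off every longest cycle, such that $G-e$ or $G/e$ remains $2$-connected with the required minimum degree, satisfies $c(G-e)=c(G)$, and $c'(G-e)\ge c'(G)$. The inductive hypothesis applied to the smaller graph then yields $c(G)-c'(G)\ge k$, a contradiction. An alternative pathway, hinted at by the abstract, is to first reduce to the wheel-based reformulation on non-hamiltonian graphs and then rule out the wheel-like configurations as longest-cycle structures.

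The principal obstacle, and the reason the conjecture is still open, is the simultaneous control of $c$ and $c'$ under local surgery: the edges whose deletion preserves $2$-connectivity and $\delta$ tend to decrease $c(G)$ and $c'(G)$ in tandem, collapsing the induction. The paper's restriction to $\ell$-holed graphs (where all holes share the same length, so the chord structure is rigid) and to graphs with small induced circumference (where an exhaustive case analysis is tractable) precisely sidesteps this difficulty, which is strong evidence that the general case will require a genuinely new idea beyond the minimum-counterexample template sketched above.
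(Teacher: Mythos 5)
The statement you were asked to prove is Harvey's conjecture, which is open; the paper does not prove it and does not claim to. What the paper actually does is (i) observe that wheels are the unique hamiltonian graphs with $c(G)=c'(G)+1$, (ii) prove that the non-hamiltonian case of the $k=2$ instance is equivalent to a wheel-characterization statement (Section~2, via contracting an edge of a longest chordless cycle in a minimum counterexample), and (iii) verify the $k=2$ case for $\ell$-holed graphs and for graphs with $c'(G)\leq 6$. So there is no proof in the paper to compare yours against, and your own closing paragraph correctly concedes that your sketch does not close the conjecture. That honesty is appropriate, but the submission is a research programme, not a proof.

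To name the gaps concretely: first, your dispatch of the hamiltonian case for general $k$ requires that every hamiltonian graph with $\delta\geq\lceil k/2+2\rceil$ contains an induced cycle of length at most $n-k$; the wheel observation only settles the case where the deficit is exactly $1$, and no classification for larger deficits exists or is supplied. Second, the claim that a chord of a longest cycle $C$ together with $P_1\cup P_2$ yields ``either a cycle strictly longer than $C$ or an induced cycle of length at least $|C|-k+1$'' is asserted, not argued: an arc of $C$ between the endpoints of a chord is generally not induced (it may carry many further chords), and the union with $P_1\cup P_2$ typically produces a cycle that is neither longer than $C$ nor chordless, so neither horn of your dichotomy is forced. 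Third, your proposed induction via $G-e$ or $G/e$ is essentially the contraction device of Section~2 of the paper, but there it only yields an \emph{equivalence} between two conjectures, because the case $c(G)=c'(G)+1$ must be handled by assuming the wheel characterization; without that input the induction does not close, which is exactly the obstruction you identify at the end. In short, the proposal contains no step that goes beyond what is already known, and the statement remains unproved.
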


 We observe that the wheel is a unique hamiltonian graph for which the circumference and the induced circumference differ by exactly one. Thus we only need to consider whether Harvey's conjecture holds for non-hamiltonian graphs.  Based on this, we make a slight modification to Conjecture~\ref{Conj2} when $k=2$.

\begin{conjecture}\label{Conjnon-H}
    
If $G$ is a non-hamiltonian $2$-connected graph with $\delta(G)\ge 3$, then $c'(G)\leq c(G)-2$.
\end{conjecture}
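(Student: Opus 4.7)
The plan is to argue by contradiction with a minimum counterexample. Suppose $G$ is non-hamiltonian, $2$-connected, has $\delta(G)\ge 3$ and $c(G)\le c'(G)+1$, with $|V(G)|$ minimum (and then $|E(G)|$ minimum). Fix a longest induced cycle $D$, so $|D|=c'(G)$. Since $G$ is non-hamiltonian and in particular $D$ is not spanning, $V(G)\setminus V(D)\ne\emptyset$; by $2$-connectivity, every vertex $v\notin V(D)$ lies on an ear $Q$ of $D$, namely a path of length $\ell\ge 2$ with endpoints $u_1,u_2\in V(D)$, internally disjoint from $V(D)$. (Length $1$ is impossible because $D$ is induced.)

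Let $d_1\le d_2$ be the two arc-lengths of $D$ from $u_1$ to $u_2$, so $d_1+d_2=c'(G)$. Concatenating $Q$ with the longer arc yields a cycle of length $\ell+d_2$, and the standing assumption $c(G)\le c'(G)+1$ immediately forces $\ell\le d_1+1$. Moreover, if this cycle has length exceeding $c'(G)$, i.e.\ if $\ell\ge d_1+1$, then it cannot be induced (else it is a hole strictly longer than $D$), so it must carry a chord; since the longer arc sits inside the induced cycle $D$, any such chord involves $Q$, and the hypothesis $\delta(G)\ge 3$ produces additional neighbors at the interior vertices of $Q$. The next step is to iterate: use the degree condition at the interior of $Q$ to extract a second ear, and continue, tracking at each step whether the new cycle has length $\ge c'(G)+2$ (contradicting the hypothesis on $c(G)$) or produces a longer hole (contradicting the maximality of $D$). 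The aim is to show that no configuration of ears survives both constraints.

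The main obstacle is exactly this endgame. When every ear has length $2$ or $d_1+1$, every arc-swap produces cycles of length exactly $c'(G)$ or $c'(G)+1$, and every chord is forced into a rigidly prescribed position, the remaining structure is a highly constrained ``chord-rich'' extension of $D$. Ruling it out is tantamount to Thomassen's chord conjecture, which is unsurprising: Conjecture~\ref{Conjnon-H} is precisely the restriction of Conjecture~\ref{Conj2} to non-hamiltonian graphs, and Harvey's conjecture implies Thomassen's. Accordingly I do not expect a complete proof of Conjecture~\ref{Conjnon-H} from bare $2$-connectivity and $\delta\ge 3$ alone; the realistic route is to impose extra structure that pins down the local configuration around $D$, which is the strategy carried out in this paper for $\ell$-holed graphs and for graphs of small induced circumference.
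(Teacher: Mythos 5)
You have not produced a proof, and you say so yourself; the honest assessment is that this cannot be held against you, because the statement is a \emph{conjecture} and the paper does not prove it either. What the paper actually does with Conjecture~\ref{Conjnon-H} is (a) show it is equivalent to Conjecture~\ref{Conj-Wheel} (the wheel characterization of $c(G)=c'(G)+1$), and (b) verify it for two special classes: graphs with $c'(G)\le 6$ (Theorem~\ref{Thm-small-hole}) and $\ell$-holed graphs (Theorem~\ref{Main-Thm}, via the structure theorem of Cook et al.). Your closing remark that a full proof from bare $2$-connectivity and $\delta\ge 3$ would essentially subsume Thomassen's chord conjecture is correct and is exactly why the paper retreats to special cases. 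So the ``gap'' in your proposal is real but unavoidable: the endgame you identify — ruling out the rigid chord-rich extensions of $D$ where every ear has length $2$ or $d_1+1$ — is precisely the open content of the problem, and no amount of iteration of your ear argument is known to close it.

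Two comparative remarks. First, your setup (longest induced cycle $D$, ears attaching to it, the inequality $\ell\le d_1+1$ from $c(G)\le c'(G)+1$) is genuinely close in spirit to the paper's proof of Theorem~\ref{Thm-small-hole}, which fixes a longest hole $C$, shows the second neighborhood $L_2$ is empty, and then exhaustively analyzes how the components of $G-V(C)$ attach to $C$; that analysis terminates only because $c'(G)\le 6$ makes the number of attachment patterns finite, which is the concrete form of the combinatorial explosion you anticipate. Second, the paper's equivalence argument uses a reduction you do not consider: in a minimum counterexample one contracts an edge $v_1v_2$ of a longest chordless cycle, checks that $G/v_1v_2$ remains $2$-connected, non-hamiltonian, of minimum degree $3$, and derives a contradiction from minimality — this is what converts the non-hamiltonian conjecture into the clean ``$c(G)=c'(G)+1$ iff $G$ is a wheel'' statement. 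One small technical slip in your text: the claim that any chord of the cycle $Q\cup(\text{longer arc})$ must ``involve $Q$'' fails when $d_1=1$, since then $u_1u_2$ is an edge of $D$ lying as a chord entirely on the arc; this does not affect your overall (correctly inconclusive) assessment.
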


 We pose the following conjecture which  is equivalent to Conjecture \ref{Conjnon-H}.
\begin{conjecture}\label{Conj-Wheel}
		Let $G$ be a $2$-connected graph with $\delta(G)\geq 3$.  Then $c(G)=c'(G)+1$  if and only if $G$ is a wheel.
\end{conjecture}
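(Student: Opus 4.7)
The plan is to prove the biconditional in Conjecture \ref{Conj-Wheel} directly, and to note how its two directions align with Conjecture \ref{Conjnon-H} to give the equivalence asserted just before it. For the easy direction, that every wheel satisfies $c(G) = c'(G)+1$, I would compute both invariants explicitly: if the rim has length $n$, the cycle $v\,r_1 r_2 \cdots r_n\,v$ shows $c(G) = n+1$, while the rim $r_1 r_2 \cdots r_n$ is an induced $n$-cycle, and any induced cycle through the center $v$ must be a triangle (a longer cycle through $v$ would admit a spoke as a chord), so $c'(G) = n$.

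For the harder direction, assume $G$ is $2$-connected with $\delta(G) \ge 3$ and $c(G) = c'(G) + 1$, and split on whether $G$ is Hamiltonian. In the Hamiltonian case, $c(G) = |V(G)|$ and $c'(G) = |V(G)|-1$, so there is an induced cycle $C^*$ of length $|V(G)|-1$ missing exactly one vertex $v$. Since $C^*$ is induced, every edge of $G$ not on $C^*$ must be incident to $v$. Each vertex of $C^*$ already has two neighbors on $C^*$, and $\delta(G) \ge 3$ forces it to have a third, which can only be $v$; hence $v$ is adjacent to every vertex of $C^*$, so $G$ is a wheel. This simultaneously establishes the introductory observation about Hamiltonian graphs. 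In the non-Hamiltonian case, Conjecture \ref{Conjnon-H} supplies $c(G) \ge c'(G) + 2$, contradicting the hypothesis $c(G) = c'(G) + 1$, so this case cannot arise.

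The two cases above prove Conjecture \ref{Conjnon-H} $\Rightarrow$ Conjecture \ref{Conj-Wheel}. For the converse implication needed for the full equivalence, let $G$ be non-Hamiltonian, $2$-connected, with $\delta(G) \ge 3$. Since $G$ is not a wheel (wheels are Hamiltonian), Conjecture \ref{Conj-Wheel} gives $c(G) \ne c'(G) + 1$; together with $c(G) \ge c'(G)$, this leaves either $c(G) = c'(G)$ or $c(G) \ge c'(G) + 2$. The step I expect to be the main obstacle is ruling out $c(G) = c'(G)$, i.e., excluding a chordless longest cycle in a non-Hamiltonian $2$-connected graph with minimum degree at least $3$. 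My tentative approach is a theta-subgraph attack: pick a would-be chordless longest cycle $C$, a vertex $w \notin V(C)$, two internally disjoint paths $P_1, P_2$ from $w$ to $C$ of minimum total length, and then use the third neighbor of $w$ granted by $\delta(G) \ge 3$ to construct either a cycle strictly longer than $C$ or an induced cycle strictly longer than $C$, contradicting maximality of $C$ or its chordlessness. This is essentially a $\delta \ge 3$ analogue of Thomassen's chord conjecture for $2$-connected graphs, and I anticipate it is the most delicate part of the argument.
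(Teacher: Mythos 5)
Your first direction (wheels satisfy $c=c'+1$, and conversely a Hamiltonian $G$ with $c(G)=c'(G)+1$ and $\delta(G)\ge 3$ must be $C\vee v$, hence a wheel) is correct and is exactly the paper's argument. The gap is in the reverse implication, Conjecture \ref{Conj-Wheel} $\Rightarrow$ Conjecture \ref{Conjnon-H}. You correctly reduce to excluding the case $c(G)=c'(G)$, i.e.\ a chordless longest cycle in a non-Hamiltonian $2$-connected graph with $\delta\ge 3$, but you do not prove it: you offer a ``theta-subgraph attack'' and yourself flag it as the delicate step. That step is genuinely the whole content of the implication, and your sketch does not close. Taking $w\notin V(C)$ with two disjoint paths $P_1,P_2$ to $C$ only forces the attachment points to be far apart on $C$ and the paths to be short relative to the arcs; the third neighbour of $w$ gives another such path, but nothing in the sketch explains why some combination of arcs and paths must yield either a longer cycle or a longer induced cycle. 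As you note, this is close in spirit to Thomassen's conjecture itself, which is a warning sign that a direct local argument is unlikely to be routine.

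The paper avoids this entirely by a minimal-counterexample induction that you do not use. It takes $G$ a counterexample to Conjecture \ref{Conjnon-H} of minimum order, deduces $c(G)=c'(G)=k$ with a chordless longest cycle $C=v_1\cdots v_k$, shows $k\ge 4$ and $N_G(v_1)\cap N_G(v_2)=\emptyset$, and contracts the edge $v_1v_2$. The contracted graph $G'$ is verified to be $2$-connected, non-Hamiltonian, with $\delta(G')\ge 3$ and fewer vertices, so by minimality $c(G')\ge c'(G')+2\ge (k-1)+2=k+1$ (using that $C/v_1v_2$ is still induced), and lifting a longest cycle of $G'$ back to $G$ gives $c(G)\ge k+1$, a contradiction. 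If you want to complete your proposal, you should replace the theta-subgraph idea with this contraction-plus-minimality argument (or supply a genuinely new proof that a chordless cycle cannot be longest under these hypotheses, which would be a substantial result on its own).
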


In Section $2$, we prove that Conjecture \ref{Conjnon-H} and Conjecture \ref{Conj-Wheel} are equivalent. In Section $3$, we prove that Conjecture \ref{Conjnon-H} holds for $\ell$-holed graphs and graphs  with a small induced circumference.

\section{Equivalence of Conjecture \ref{Conjnon-H} and Conjecture \ref{Conj-Wheel}}\label{Cha}

\begin{proof}[\bf{Proof that Conjecture \ref{Conjnon-H} and Conjecture \ref{Conj-Wheel} are equivalent}] \smallskip
	    Suppose that Conjecture \ref{Conjnon-H} holds. Let $G$ be a $2$-connected graph of order $n$ with $\delta(G)\geq 3$. If $G$ is a wheel, then $c(G)=c'(G)+1$ by the definition of wheel. If $c(G)=c'(G)+1$, by Conjecture \ref{Conjnon-H}, we have that $G$ is hamiltonian. Thus, $c(G)=n$ and  $c'(G)=n-1$. Let $C$ be a longest induced cycle of $G$. Without loss of generality, assume that  $V(G)\setminus V(C)=\{u\}$. Note that $\delta(G)\geq 3$. This implies that $G=C\vee u$. That is, $G$ is a wheel.
		
		Conversely, suppose that Conjecture \ref{Conj-Wheel} holds. To prove Conjecture  \ref{Conjnon-H},  let $G$ be a counterexample to  Conjecture \ref{Conjnon-H} with minimum order. Then $c(G)\leq c'(G)+1$. Note that $c'(G)\leq c(G)$. This implies that 
		\begin{flalign*}\label{fl1}
			c'(G)\leq c(G)\leq c'(G)+1.
		\end{flalign*}
		
		Suppose that $c(G)=c'(G)+1$. By Conjecture \ref{Conj-Wheel}, we obtain that  $G$ is a wheel. That is, $G$ is hamiltonian, a contradiction. Thus, $c(G)=c'(G)$.
		
		Let $C$ be a longest chordless cycle of $G$. For convenience, we may assume that $c(G)=k$ and $C=v_1,v_2,\ldots,v_k,v_1$. 
		\begin{claim}\label{Claimk}
			$k\geq 4$.
		\end{claim}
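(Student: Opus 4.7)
The plan is the following. We have just established that $c(G) = c'(G) = k$, so the inequality $k \geq 4$ is equivalent to $c(G) \geq 4$; since every cycle already has length at least $3$, the content is really in ruling out $k = 3$. I would prove the slightly stronger statement that any graph with $\delta \geq 3$ has circumference at least $4$, using only the minimum-degree hypothesis (the $2$-connectivity is not even needed here).

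The argument is a standard longest-path application. Let $P = u_0 u_1 \cdots u_\ell$ be a longest path in $G$. Because $P$ cannot be extended at $u_0$, every neighbor of $u_0$ lies on $P$. The hypothesis $\deg_G(u_0) \geq \delta(G) \geq 3$ then forces $u_0$ to have at least three neighbors among $u_1, u_2, \ldots, u_\ell$, so the farthest such neighbor $u_i$ satisfies $i \geq 3$. Then $u_0 u_1 u_2 \cdots u_i u_0$ is a cycle of length $i + 1 \geq 4$, whence $c(G) \geq 4$ and thus $k \geq 4$.

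No serious obstacle is expected; the claim is essentially a reformulation of the classical fact that $\delta(G) \geq 3$ already guarantees a cycle of length at least $4$. The purpose of isolating it as a claim seems merely to permit the subsequent analysis to treat the chordless cycle $C = v_1 v_2 \cdots v_k v_1$ as having at least four distinct vertices, so that pairs of nonadjacent vertices on $C$ (and in particular indices manipulated modulo $k$) can be chosen freely later in the proof.
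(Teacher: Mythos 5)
Your proof is correct, but it takes a different route from the paper's. You invoke the classical longest-path argument to show $c(G)\geq \delta(G)+1\geq 4$ directly: every neighbor of an endpoint of a longest path lies on the path, so the farthest such neighbor closes a cycle of length at least $\delta(G)+1$. The paper instead argues by contradiction using $2$-connectivity: if $k\leq 3$ then $C$ is a triangle, and since $\delta(G)\geq 3$ forces $V(G)\setminus V(C)\neq\emptyset$, a component $H$ of $G-C$ attaches to $C$ at two vertices, so routing a path through $H$ and around the third vertex of $C$ produces a cycle of length at least $4$, contradicting $c(G)=k$. Your argument is more elementary and more general (it does not need $2$-connectivity at all, only the minimum-degree hypothesis), and it is essentially the standard fact that $\delta(G)\geq 3$ guarantees a cycle of length at least $4$; the paper's argument has the advantage of being in the same style as the rest of the proof, which repeatedly routes paths through components of $G-V(C)$ to lengthen cycles. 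Either way the conclusion $k\geq 4$ follows, since in context $k=c(G)$. Your closing remark about the purpose of the claim is also accurate.
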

		Otherwise, suppose $k\leq 3$. This implies that $C$ is a triangle. Let $H$ be a component of $G-C$. Since $G$ is $2$-connected, $|N_C(H)|\geq 2$. This implies that $C\cup H$ contains a cycle with length at least $4$, a contradiction.
		
		By Claim \ref{Claimk} and $C$ is a longest cycle, we have that $N_G(v_i)\cap N_G(v_{i+1})=\emptyset$. Furthermore,  let $G'=G/v_1v_2$ and denote the new vertex by $v^*$. Since $N_G(v_1)\cap N_G(v_2)=\emptyset$ and $\delta(G)\geq 3$. Clearly, we have $\delta(G')\geq 3$.
		
		\begin{claim}
			$G'$ is non-hamiltonian.
		\end{claim}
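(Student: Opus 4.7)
The plan is to argue by contradiction: assume $G'$ admits a Hamilton cycle $C'$, and use the edge $v_1v_2$ to lift $C'$ back into a long cycle of $G$. Let $u,w$ denote the two neighbors of $v^*$ on $C'$. Since $v^*$'s neighborhood in $G'$ is $(N_G(v_1)\cup N_G(v_2))\setminus\{v_1,v_2\}$ and the hypothesis $N_G(v_1)\cap N_G(v_2)=\emptyset$ (already established) makes this a disjoint union, each of $u,w$ lies in exactly one of the sets $A:=N_G(v_1)\setminus\{v_2\}$ or $B:=N_G(v_2)\setminus\{v_1\}$. This naturally splits into two cases.

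In the easy case, one of $u,w$ lies in $A$ and the other in $B$. I would then replace the subpath $u\,v^*\,w$ of $C'$ by the path $u\,v_1\,v_2\,w$, which exists in $G$ by the assumption on $u,w$ and the edge $v_1v_2$. This produces a cycle of $G$ through all $n$ vertices, i.e., a Hamilton cycle, contradicting the fact that $G$ is a counterexample to Conjecture~\ref{Conjnon-H} and hence non-hamiltonian.

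The remaining case, which I expect to be the main obstacle, is when $u$ and $w$ both lie in $A$ (the subcase that both lie in $B$ is symmetric under the swap of $v_1$ and $v_2$). Here, replacing $v^*$ by $v_1$ in $C'$ produces a cycle $D$ in $G$ of length $n-1$ which avoids $v_2$; hence $c(G)\geq n-1$. Combined with $c(G)=k$ and the non-hamiltonicity of $G$, this forces $k=n-1$, so $V(G)\setminus V(C)$ consists of exactly one vertex, call it $x$.

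Finally I would exploit $\delta(G)\geq 3$ together with the chordlessness of $C$ to reach a contradiction. Since $C$ is chordless, the neighbors of $v_1$ on $V(C)$ are exactly $v_2$ and $v_k$, and likewise the neighbors of $v_2$ on $V(C)$ are exactly $v_1$ and $v_3$. As $V(G)\setminus V(C)=\{x\}$, the only possible third neighbor of either vertex is $x$; the minimum degree condition $\delta(G)\geq 3$ then forces both $v_1x$ and $v_2x$ to be edges of $G$. But then $x\in N_G(v_1)\cap N_G(v_2)$, contradicting the established fact that $N_G(v_1)\cap N_G(v_2)=\emptyset$. This closes the second case and completes the proof that $G'$ is non-hamiltonian.
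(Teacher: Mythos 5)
Your proposal is correct and follows essentially the same route as the paper: lift the Hamilton cycle of $G'$ back to $G$, deduce that $c(G)=n-1=k$, so exactly one vertex lies off $C$, and then use $\delta(G)\geq 3$ with the chordlessness of $C$ to reach a contradiction. You are merely more explicit about the two ways the uncontraction can go (the paper compresses this into ``$C_1$ is at least as long as $C'$''), and you finish via the common-neighbor contradiction with $N_G(v_1)\cap N_G(v_2)=\emptyset$ where the paper instead observes that $G$ would be a wheel and hence hamiltonian; both endings are valid.
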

		Otherwise, suppose $G'$ is hamiltonian. Let $C'$ be a Hamilton cycle in $G'$. Clearly $C'$ can be transformed to a cycle in $G$. We denote this cycle in $G$ by $C_1$. It is obvious that the $C_1$ in $G$ is at least as long as the cycle $C'$ in $G'$. Since $G$ is non-hamiltonian and $G'$ is hamiltonian. We have $c(G')=n-1=c(G)$. This implies that $c'(G)=n-1$. That is, $k=n-1$. Let $V(G)\setminus V(C)=\{u\}$. Thus, $G=C\vee u$. It follows that $G$ is hamiltonian, a contradiction.
		
		\begin{claim}
			$G'$ is $2$-connected.
		\end{claim}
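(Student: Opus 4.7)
The plan is to show that contracting the edge $v_1v_2$ can only lose $2$-connectivity in one way, namely if $\{v_1,v_2\}$ is a vertex cut of $G$, and then rule this out by building a cycle longer than $C$.

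First I would observe the following general fact: if $G$ is $2$-connected and $w \neq v^*$ is any vertex of $G'$, then $G' - w$ is obtained from the connected graph $G - w$ by contracting $v_1v_2$, so $G' - w$ is connected. Hence the only possible obstruction to $2$-connectivity of $G'$ is that $G' - v^*$ is disconnected, which happens exactly when $\{v_1,v_2\}$ is a vertex cut of $G$. So the claim reduces to proving that $\{v_1,v_2\}$ is not a vertex cut of $G$.

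Suppose for contradiction that $G - \{v_1,v_2\}$ is disconnected. Since $v_3,v_4,\ldots,v_k$ form a path in $G - \{v_1,v_2\}$ (recall $k \geq 4$ by Claim~\ref{Claimk}), they lie in one common component, call it $H_1$. Let $H_2$ be another component. Because $G$ is $2$-connected, neither $v_1$ nor $v_2$ alone is a cut vertex, so each of $v_1,v_2$ must have at least one neighbor in $H_2$. By $2$-connectivity of $G$ applied inside $H_2 \cup \{v_1,v_2\}$, there exists a path $P$ from $v_1$ to $v_2$ whose internal vertices all lie in $H_2$; such a $P$ has length at least $2$.

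Now concatenating $P$ with the path $v_2v_3\cdots v_kv_1$ yields a cycle in $G$ of length at least $(k-1)+2 = k+1 > c(G)$, which is the desired contradiction. The key step where care is needed is verifying that the path $P$ is vertex-disjoint from $\{v_3,\ldots,v_k\}$, but this is immediate since the internal vertices of $P$ are in $H_2$ while $v_3,\ldots,v_k \in H_1$, and $H_1 \cap H_2 = \emptyset$. I do not expect any serious obstacle: the main idea is simply that a cut of size $2$ across an edge of the longest cycle would let us re-route that edge through the other side to obtain a longer cycle.
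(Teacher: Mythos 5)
Your proof is correct and takes essentially the same route as the paper: reduce the claim to showing that $\{v_1,v_2\}$ is not a vertex cut of $G$, and then rule that out by rerouting the edge $v_1v_2$ of $C$ through a component of $G-\{v_1,v_2\}$ disjoint from $v_3,\dots,v_k$, producing a cycle of length at least $k+1$ and contradicting $c(G)=k$. Your write-up is in fact slightly more explicit than the paper's (e.g.\ in justifying that any cut vertex of $G'$ must be $v^*$ and that the detour avoids $v_3,\dots,v_k$), but the argument is the same.
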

		If  $G'$ has a cut vertex $u$, then we assert that $u=v^*.$  Otherwise, $u$ is a cut vertex of $G,$ a contradiction. It follows that $\{v_1,v_2\}$ is a vertex cut in $G$. Let $H$ be a component in $G-\{v_1,v_2\}$ which contains no vertex of $V(C)\setminus \{v_1,v_2\}$. Since $G$ is $2$-connected, $\{v_1,v_2\}\subseteq N_G(H)$. This implies that $C$ can be transformed into a cycle $C^*$ in $G$ by adding a path in $H$, and $C^*$ is strictly longer than $C$, a contradiction.
		
		Note that $G'$ is a non-hamiltonian $2$-connected graph of order $n-1$ with $\delta(G')\geq 3$. Therefore, $c(G')\geq c'(G')+2$. On the other hand, $C/v_1v_2$ is an induced cycle in $G'$. Thus, $c'(G')\geq k-1$. This implies that $c(G')\geq k+1$.  Let $C{''}$ be a longest cycle in $G'$. Clearly $C^{''}$ can be transformed to a cycle in $G$. We denote this cycle in $G$ by $C_1^{''}$. It is obvious that the $C_1^{''}$ in $G$ is at least as long as the cycle $C^{''}$ in $G'$. This implies that $c(G)\geq k+1$, which contradicts $c(G)=c'(G)=k$. This proves Conjecture~ \ref{Conjnon-H}.  
\end{proof}

\section{Graphs with a small induced circumference  and $\ell$-holed graphs}

Let $G$ be a graph, we denote by $d_G(u, v)$ the distance between two vertices $u$ and $v$ in $G$. Let $S$ be a set of vertices, and let $u$ be a vertex. We define $d(u,S)=\min\{ d(u,v) : v\in S\}$. Let $X,Y\subseteq V(G)$. An $(X,Y)$-path is a path with one endpoint in $X$, the other endpoint in $Y$, and internal vertices not in $X\cup Y$. If $X=\{x\}$, then we use $(x,Y)$-path instead of $(\{x\}, Y)$-path.
	
\begin{theorem}\label{Thm-small-hole}
		Let $G$ be a $2$-connected non-hamiltonian graph with $\delta(G)\geq 3$. If $c'(G)\leq 6$, then $c(G)\geq c'(G)+2$.
	\end{theorem}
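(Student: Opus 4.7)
\medskip

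The plan is to set $\ell := c'(G) \in \{3, 4, 5, 6\}$, assume for contradiction that $c(G) \leq \ell + 1$, and in each case exhibit a cycle of length at least $\ell + 2$.

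\smallskip

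\noindent\emph{Case $\ell = 3$.} The hypothesis says every hole of $G$ has length less than $4$, i.e., $G$ is chordal. Every chordal graph has a simplicial vertex $v$, and $\delta(G) \geq 3$ then forces $\{v\} \cup N_G(v)$ to induce a clique containing a $K_4$. Since $G$ is non-hamiltonian, $G \neq K_4$; by $2$-connectivity, some vertex $w \notin V(K_4)$ admits two internally disjoint paths in $G$ to distinct vertices $x, y$ of the $K_4$. Concatenating these two paths with the length-$3$ $x$-$y$ Hamilton path of $K_4$ through the two remaining vertices produces a cycle of length at least $5 = \ell + 2$, contradicting $c(G) \leq 4$.

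\smallskip

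\noindent\emph{Cases $\ell \in \{4, 5, 6\}$.} Fix a longest induced cycle $C = v_1 v_2 \cdots v_\ell v_1$. Since $C$ is chordless and $\delta(G) \geq 3$, the assumption $V(G) = V(C)$ would force some $v_i$ to have a non-consecutive neighbor on $C$, producing a chord; so $V(G) \supsetneq V(C)$. Picking $u \in V(G) \setminus V(C)$, $2$-connectivity yields a $C$-ear $P$ through $u$ with distinct attachments $u_1, u_2 \in V(C)$; let $a \leq b$ be the lengths of the two arcs of $C$ between $u_1, u_2$, so $a + b = \ell$. Because $P$ together with the $b$-arc is a cycle of length $|P| + b$, the standing assumption $c(G) \leq \ell + 1$ forces the \emph{ear inequality} $|P| \leq a + 1$.

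I then case-split on $\ell$ and on the pair $(a, |P|)$ subject to $2 \leq |P| \leq a + 1$. In favourable subcases (for instance $\ell = 4$ with $a = 2$ and $|P| = 3$, or $\ell = 5$ with $a = 2$) a direct computation of $|P| + a$ or $|P| + b$ already produces the desired cycle. In the remaining subcases I invoke $\delta(G) \geq 3$ to obtain a forced third neighbour at $u$ or at some $v_i$. By the chord-free property of $C$ and the ear inequality this third neighbour is typically forced to lie off $C$, producing a second $C$-ear $P'$ with attachments $u_1', u_2' \in V(C)$. When the attachments of $P$ and $P'$ interleave around $C$, the concatenation of $P$, an arc, $P'$, and the complementary arc yields a cycle of length at least $\ell + 2$. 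As a representative subcase, for $\ell = 6$ with $P = v_1 u v_4$ (so $|P| = 2$ and $a = b = 3$), iterating the third-neighbour argument at $v_2$ and $v_5$ produces an interleaved ear $v_2 w v_5$, and the cycle $v_1 u v_4 v_3 v_2 w v_5 v_6 v_1$ has length $8$.

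\smallskip

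\noindent\emph{Main obstacle.} The heavy part is the bookkeeping in the $\ell = 6$ case: many $(a, |P|)$ combinations arise, and in each one I must show that the third-neighbour arguments cannot be absorbed back into already-known vertices without either creating a chord of $C$, violating the ear inequality, or producing the desired length-$8$ cycle. The guiding invariant throughout is that every bridge of $C$ has at least two attachment points on $C$ (by $2$-connectivity) and that two bridges with interleaved attachments always yield a cycle of length at least $\ell + 2$.
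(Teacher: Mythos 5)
Your overall strategy---fix a longest induced cycle $C$, use $2$-connectivity to produce ears ($C$-bridges), bound their lengths by the assumption $c(G)\leq \ell+1$, and combine interleaved ears into a long cycle---is the same general approach as the paper, which organizes the analysis by the components of $G-V(C)$ and their attachment sets $N_C(H_i)$ after first showing that $V(C)$ dominates $G$. However, as written your argument has a genuine gap: the ``guiding invariant'' that \emph{two bridges with interleaved attachments always yield a cycle of length at least $\ell+2$} is false. Take $\ell=6$ and two length-$2$ ears $v_1uv_4$ and $v_2wv_6$. Their endpoint pairs interleave on $C$, yet the two ways of concatenating ear, arc, ear, arc give cycles of length $2+2+2+1=7$ in both cases (the two omitted arcs always have total length $3$), and the resulting $7$-cycle has the chord $v_1v_6$, so it is not even an induced cycle longer than $6$. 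In general an interleaved pair of length-$2$ ears guarantees only $|e_1|+|e_2|+\max(g_1+g_3,\,g_2+g_4)\geq 2+2+\ell/2=7$ when $\ell=6$; you get $8$ only when the two ears are both ``antipodal'' (attachments at $C$-distance $3$) or when one ear has length at least $3$. Precisely this configuration is the crux of the paper's Case 2/Subcase 1, where it is resolved by pinning down $N_C(H_2)$ exactly to $\{v_2,v_5\}$ and then arguing from $\delta(G)\geq 3$ and $2$-connectivity that the corresponding bridge has at least two vertices and hence carries an ear of length at least $3$. Your sketch never supplies this step.

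Relatedly, your representative subcase for $\ell=6$ asserts that applying the third-neighbour argument at $v_2$ and $v_5$ ``produces an interleaved ear $v_2wv_5$,'' but the off-cycle neighbours of $v_2$ and of $v_5$ need not lie in the same bridge, and the bridge containing $v_2$'s neighbour need not attach at $v_5$ (it could attach at $v_6$, which is exactly the problematic configuration above). Since you explicitly defer ``the bookkeeping in the $\ell=6$ case'' as the main obstacle, and the one tool you propose for closing it is the false invariant, the proof is not complete as it stands. (Your $\ell=3$ case via a simplicial vertex is fine, and is in fact more explicit than the paper, which proves only $\ell=6$ and declares the smaller cases similar.) To repair the argument you would need to add, for each non-antipodal interleaving pattern, the paper's extra step: show the relevant attachment set is exactly a specific pair $\{x,y\}$, deduce $|H|\geq 2$ from $\delta(G)\geq 3$, and extract an $(x,y)$-ear of length at least $3$ before concatenating.
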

	\begin{proof}[\bf{Proof.}]
		It suffices to show that $G$ has a cycle with length at least $c'(G)+2$. Here we only prove that the case of $c'(G)=6$. The other three cases (for $c'(G)=3,4,5$) can be proved by a similar analysis and we omit the proofs.
		
		Let $G$ be a $2$-connected non-hamiltonian graph with $\delta(G)\geq 3$ and  $c'(G)=6$. To the contrary, suppose $G$ is a counterexample of Theorem \ref{Thm-small-hole}, i.e., $c'(G)\leq c(G)\leq c'(G)+1$.
		Now, let  $C=v_1,v_2,\ldots,v_6,v_1$ be a chordless cycle in $G$. Let $L_i=\{u\in V(G):d(u, V(C))=i\}$ for any positive integer $i$.
		\begin{claim}\label{Claim-L2-empty}
			$L_2=\emptyset.$
		\end{claim}
		
		Otherwise, let $u\in L_2$.  Suppose that $u$ has a neighbor $u'$ in $L_2$.  Since $G$ is $2$-connected, there exist two $(\{u,u'\}, V(C))$-paths $P_1,P_2$ such that $V(P_1)\cap V(P_2)=\emptyset$. For convenience, assume that $P_1=u,a_1,\ldots,v_1$ and $P_2=u',b_1,\ldots,v_i$. Note that $P_1,u,u',P_2$ is a path with length at least $5$. However, it follows that either $u,P_1,v_1,v_2,\ldots, v_i,P_2,u',u$ is a cycle with length at least $8$ or $u',P_2,v_i,v_{i+1},\ldots ,v_6,v_1,P_1,u,u'$ is a cycle  with length at least $8$, a contradiction.  Hence, $N(u)\subseteq L_1$.  Since $G$ is $2$-connected, there exist two $(u, V(C))$-paths $P_3,P_4$ such that $V(P_3)\cap V(P_4)=\{u\}$.  Without loss of generality, we may assume that $P_3=u,c_1,\ldots,c_s,v_1$ and $P_4=u,d_1,\ldots ,d_t,v_i$, respectively.  To forbid $u,P_3,v_1,v_2\ldots, v_i,P_4,u$ and $u,P_4,v_i,v_{i+1}\ldots, v_6,v_1,P_3,u$ being a cycle of length greater than $7$, we have that $v_i=v_4$ and $s=t=1$. Since $c'(G)=6$, the cycle $u,c_1,v_1,v_2,v_3,v_4,d_1,u$ has a chord.   Clearly, $N_G(\{c_1,d_1\})\cap \{v_2,v_3\}=\emptyset.$ This implies that $c_1d_1\in E(G)$. Since $\delta(G)\geq 3$, $u$ has a neighbor $u^*$ distinct from $c_1$ and $d_1$. It is obvious that $N_G(u^*)\cap \{c_1,d_1\}=\emptyset$.   Since $u^*\in L_1$, we have that $u^*$ has neighbor in $V(C)$, say $v_j$. If $j\notin \{v_1,v_4\}$, then either $u,c_1,v_1,v_2,\ldots ,v_j,u^*,u$ is a cycle with length at least $8$ or $u,u^*,v_j,v_{j+1},\ldots ,v_6,v_1,c_1,u$  is a cycle with length at least $8$, a contradiction. By symmetry, we may assume that $v_j=v_1$. 
		However, it follows that $u,c_1,d_1,v_4,v_5,v_6,v_1,u^*,u$ is a cycle with length at least $8$, a contradiction.

		By Claim \ref{Claim-L2-empty}, we can deduce that $V(C)$ is a dominating set of $G.$ Let $H_1,\dots, H_t$ be the components of $G-V(C)$. We will discuss two cases.
		
		\begin{case}
			There exists a component $H_i$ such that $N_C(H_i)$ has two consecutive vertices in $C$.
		\end{case}
		Without loss of generality, we assume that $\{v_1,v_2\}\subseteq N_C(H_1)$.
		Suppose that $N_C(H_1)=\{v_1,v_2\}$. As $G$ is $2$-connected, there exist two distinct vertices $v_1'$ and $v_2'$ such that $v_1'\in N_{H_1}(v_1)$ and $v_2'\in N_{H_1}(v_2)$. Let  $P_{v_1'v_2'}$ be a $(v_1',v_2')$-path with interior in $H_1$. Then
		\[
		v_1,v_1',P_{v_1'v_2'},v_2',v_2,v_3,v_4,v_5,v_6,v_1
		\]
		is a cycle with length at least $8$, a contradiction.
		Therefore, we have $\{v_3,v_4\}\cap N_C(H_1)\neq \emptyset$ or $\{v_5,v_6\}\cap N_C(H_1)\neq \emptyset$. By symmetry, we assume that $\{v_3,v_4\}\cap N_C(H_1)\neq \emptyset$.
		
		\begin{subcase}\label{case-4v}
			$\{v_3,v_4\}\subseteq N_C(H_1)$.
		\end{subcase}
		
		Suppose that $\{v_5,v_6\}\subseteq N_C(H_1)$. We claim that $N_{H_1}(C)$ is a single vertex. Otherwise, there exists a $(v_i,v_{i+1})$-path $P_{v_i,v_{i+1}}$ with length at least $3$ and its interior in $V(H_1)$ for some $i\in [6]$,
		and hence $$v_i,P_{v_i,v_{i+1}},v_{i+1},\dots,v_i$$ is a cycle with length at least $8$, a contradiction.
		Hence, $G$ contains a subgraph $W$ isomorphic to $W_6$. Note that $V(G)\setminus V(W)\neq \emptyset$, as $G$ is non-Hamiltonian. Let $v\in V(G)\setminus V(W)$. Then there exist two $(v,V(W))$-paths $Q_1,Q_2$ such that $V(Q_1)\cap V(Q_2)= \emptyset$, as $G$ is $2$-connected.
		It is not hard to find a cycle with length at least $8$ in $V(W)\cup V(Q_1)\cup V(Q_2)$, a contradiction.
		Therefore, we have $v_5\notin N_C(H_1)$ or $v_6\notin N_C(H_1)$. By symmetry, we assume that $v_5\notin N_C(H_1)$. Without loss of generality, we assume that $v_5\in N_C(H_2)$. 
		
		Suppose that $v_4\in N_C(H_2)$. Let $P_{v_4,v_5}$ be a $(v_4,v_5)$-path with interior in $V(H_2)$ and let $P_{v_1,v_2}$ be a $(v_1,v_2)$-path with interior in $V(H_1)$. Then 
		\[
		v_4,P_{v_4,v_5},v_5,v_6,v_1,P_{v_1,v_2},v_2,v_3,v_4
		\]
		is a cycle with length at least $8$, a contradiction.
		Suppose that $v_3\in N_C(H_2)$. Let $P_{v_5,v_3}$ be a $(v_5,v_3)$-path with interior in $V(H_2)$ and let $P_{v_4,v_2}$ be a $(v_4,v_2)$-path with interior in $V(H_1)$. Then 
		\[
		v_5,P_{v_5,v_3},v_3,v_4,P_{v_4,v_2},v_2,v_1,v_6,v_5
		\]
		is a cycle with length at least $8$, a contradiction.
		By a similar analysis, we also have $v_6,v_2\notin N_C(H_2)$.  Since $G$ is $2$-connected, we have $N_C(H_2)=\{v_5,v_1\}$. Furthermore, there exists a $(v_1,v_5)$-path $P_{v_1,v_5}$ with length at least $3$ and its interior in $V(H_2)$. Let $P_{v_2,v_1}$ be a $(v_2,v_1)$-path with interior in $V(H_1)$. Then
		\[
		v_1,P_{v_1,v_5},v_5,v_4,v_3,v_2,P_{v_2,v_1},v_1
		\]
		is a cycle with length at least $8$, a contradiction.
		
		\begin{subcase}
			$v_3\in N_C(H_1)$ and $v_4\notin N_C(H_1)$.
		\end{subcase}
		Without loss of generality, we assume that $v_4\in N_C(H_2)$. 
		Suppose that $v_2\in N_C(H_2)$. Let $P_{v_4,v_2}$ be a $(v_4,v_2)$-path with interior in $V(H_2)$ and let $P_{v_3,v_1}$ be a $(v_3,v_1)$-path with interior in $V(H_1)$. Then
		\[
		v_4,P_{v_4,v_2},v_2,v_3,P_{v_3,v_1},v_1,v_6,v_5,v_4
		\]
		is a cycle with length at least $8$, a contradiction.
		Suppose that $v_3\in N_C(H_2)$. Let $P_{v_4,v_3}$ be a $(v_4,v_3)$-path with interior in $V(H_2)$ and let $P_{v_3,v_2}$ be a $(v_3,v_2)$-path with interior in $V(H_1)$. Then
		\[
		v_4,P_{v_4,v_3},v_3,P_{v_3,v_2},v_2,v_1,v_6,v_5,v_4
		\]
		is a cycle with length at least $8$, a contradiction. By a similar analysis, we have $v_5\notin N_C(H_2)$. Furthermore, we have $\{v_1,v_6\} \not\subset N_C(H_2)$.
		
		Suppose that $v_6\in N_C(H_2)$. Then there exists a $(v_6,v_4)$-path $P_{v_6,v_4}$ with length at least $3$ and its interior in $V(H_2)$. Let $P_{v_2,v_1}$ be a $(v_2,v_1)$-path with interior in $V(H_1)$. Then,
		\[
		v_6,P_{v_6,v_4},v_4,v_3,v_2,P_{v_2,v_1},v_1,v_6
		\]
		is a cycle with length at least $8$, a contradiction.
		Therefore, $N_C(H_2)=\{v_4,v_1\}$.
		
		Suppose that $v_6\in N_C(H_1)$. Let $P_{v_6,v_3}$ be a $(v_6,v_3)$-path with interior in $V(H_1)$ and let $P_{v_1,v_4}$ be a $(v_1,v_4)$-path with interior in $V(H_2)$.
		Then
		\[
		v_6,P_{v_6,v_3},v_3,v_2,v_1,P_{v_1,v_4},v_4,v_5,v_6
		\]
		is a cycle with length at least $8$, a contradiction.
		Without loss of generality, we may assume that $v_6\in N_C(H_3)$.
		
		Suppose that $v_1\in N_C(H_3)$. Let $P_{v_6,v_1}$ be a $(v_6,v_1)$-path with interior in $V(H_3)$ and let $P_{v_1,v_2}$ be a $(v_1,v_2)$-path with interior in $V(H_1)$. Then
		\[
		v_6,P_{v_6,v_1},v_1,P_{v_1,v_2},v_2,v_3,v_4,v_5,v_6
		\]
		is a cycle with length at least $8$, a contradiction. Similarly, we have $v_5\notin N_C(H_3)$.
		Suppose that $v_2\in N_C(H_3)$. Let $P_{v_6,v_2}$ be a $(v_6,v_2)$-path with interior in $V(H_3)$ and let $P_{v_1,v_3}$ be a $(v_1,v_3)$-path with interior in $V(H_1)$. Then
		\[
		v_6,P_{v_6,v_2},v_2,v_1,P_{v_1,v_3},v_3,v_4,v_5,v_6
		\]
		is a cycle with length at least $8$, a contradiction.
		Suppose that $v_3\in N_C(H_3)$. Let $P_{v_6,v_3}$ be a $(v_6,v_3)$-path with interior in $V(H_3)$ and let $P_{v_1,v_4}$ be a $(v_1,v_4)$-path with interior in $V(H_2)$. Then
		\[
		v_6,P_{v_6,v_3},v_3,v_2,v_1,P_{v_1,v_4},v_4,v_5,v_6
		\]
		is a cycle with length at least $8$, a contradiction.
		Therefore, we have $N_C(H_3)=\{v_6,v_4\}$. Then there exists a $(v_6,v_4)$-path $P_{v_6,v_4}$ with length at least $3$ and its interior in $V(H_3)$. Let $P_{v_3,v_2}$ be a $(v_1,v_2)$-path with interior in $V(H_1)$. Then
		\[
  v_6,P_{v_6,v_4},v_4,v_3,P_{v_3,v_2},v_2,v_1,v_6
		\]
		is a cycle with length at least $8$, a contradiction.

		\begin{subcase}
			$v_4\in N_C(H_1)$ and $v_3\notin N_C(H_1)$.
		\end{subcase}
		Without loss of generality, we may assume that $v_3\in N_C(H_2)$.
		Suppose that $v_4\in N_C(H_2)$. Let $P_{v_3,v_4}$ be a $(v_3,v_4)$-path with interior in $V(H_2)$ and let $P_{v_1,v_2}$ be a $(v_1,v_2)$-path with interior in $V(H_1)$. Then
		\[
		v_1,P_{v_1,v_2},v_2,v_3,P_{v_3,v_4},v_4,v_5,v_6,v_1
		\]
		is a cycle with length at least $8$, a contradiction. Similarly, we have $v_2\notin N_C(H_2)$.
		Suppose that $v_1\in N_C(H_2)$. Let $P_{v_3,v_1}$ be a $(v_3,v_1)$-path with interior in $V(H_2)$ and let $P_{v_2,v_4}$ be a $(v_2,v_4)$-path with interior in $V(H_1)$. Then
		\[
		v_3,P_{v_3,v_1},v_1,v_6,v_5,v_4,P_{v_4,v_2},v_2,v_3
		\]
		is a cycle with length at least $8$, a contradiction.
		Suppose that $v_5\in N_C(H_2)$. Let $P_{v_5,v_3}$ be a $(v_5,v_3)$-path with interior in $V(H_2)$ and let $P_{v_2,v_4}$ be a $(v_2,v_4)$-path with interior in $V(H_1)$. Then
		\[
		v_3,P_{v_3,v_5},v_5,v_6,v_1,v_2,P_{v_2,v_4},v_4
		\]
		is a cycle with length at least $8$, a contradiction.
		Therefore, we have $N_C(H_2)=\{v_3,v_6\}$. Furthermore, there exists a $(v_3,v_6)$-path $P_{v_3,v_6}$ with length at least $3$ and its interior in $V(H_2)$. Let $P_{v_2,v_4}$ be a $(v_2,v_4)$-path with interior in $V(H_1)$. Then
		\[
		v_3,P_{v_3,v_6},v_6,v_1,v_2,P_{v_2,v_4},v_4,v_3
		\]
		is a cycle with length at least $8$, a contradiction.
		
		\begin{case}
			For each component $H_i$ of $G-V(C)$, $N_C(H_i)$ is independent in $C$.
		\end{case}
		
		\begin{subcase}
			There exists a component $H_i$ such that $d_C(x,y)=2$ for some $x,y\in N_C(H_i)$.
		\end{subcase}
		Without loss of generality, we may assume that $\{v_1,v_3\}\subseteq N_C(H_1)$ and $v_2\in N_C(H_2)$.
		Suppose that $v_4\in N_C(H_2)$. Let $P_{v_2,v_4}$ be a $(v_2,v_4)$-path with interior in $V(H_2)$ and let $P_{v_1,v_3}$ be a $(v_1,v_3)$-path with interior in $V(H_1)$. Then
		\[
		v_1,P_{v_1,v_3},v_3,v_2,P_{v_2,v_4},v_4,v_5,v_6,v_1
		\]
		is a cycle with length at least $8$, a contradiction. Similarly, we have $v_6\notin N_C(H_2)$.
		Therefore, we have $N_C(H_2)=\{v_2,v_5\}$.
		Furthermore, there exists a $(v_2,v_5)$-path $P_{v_2,v_5}$ with length at least $3$ and its interior in $V(H_2)$. Let $P_{v_3,v_1}$ be a $(v_3,v_1)$-path with interior in $V(H_1)$. Then
		\[
		v_2,P_{v_2,v_5},v_5,v_4,v_3,P_{v_3,v_1},v_1,v_2
		\]
		is a cycle with length at least $8$, a contradiction.
		
		\begin{subcase}
			For any $x,y\in N_C(H_i)$, $d_C(x,y)=3$.
		\end{subcase}
		Without loss of generality, we may assume that $v_1\in N_C(H_1)$ and $v_2\in N_C(H_2)$.
		Then we have $v_4\in N_C(H_1)$ and $v_5\in N_C(H_2)$. Let $P_{v_1,v_4}$ be a $(v_1,v_4)$-path with interior in $V(H_1)$ and let $P_{v_2,v_5}$ be a $(v_2,v_5)$-path with interior in $V(H_2)$.
		Hence,
		\[
		v_1,P_{v_1,v_4},v_4,v_3,v_2,P_{v_2,v_5},v_5,v_6,v_1
		\]
		is a cycle with length at least $8$, a contradiction.
		
		This completes the proof of Theorem \ref{Thm-small-hole}.
	\end{proof}

A chordal graph is a simple graph in which every cycle of length greater than three has a chord.
	If $\ell\ge 4$ is an integer, we say a graph is $\ell$-holed if all its holes have length exactly $\ell$. An $i$-wheel is a wheel such that its hole has $i$ vertices. We say that $X \subseteq V (G)$ is a {\it clique cutset} of $G$ if $G[X]$ is a complete
	graph and $G\setminus X$ is disconnected.  A vertex of $G$ adjacent to all the other vertices of $G$ a {\it universal vertex}.
	
	Before proving our main theorem, we first provide the definitions of two classes of 
	$\ell$-hole graphs: a blow-up of an $\ell$-cycle and a blow-up of an $\ell$-framework. For more information, we recommend that readers refer to \cite{CHPRSSTV2024}.
		
	Let $X, Y$ be disjoint subsets of $V(G),$  $X, Y$ are {\it complete} (to each other) if every vertex of $X$ is adjacent to every vertex in $Y,$ and {\it anticomplete} (to each other) if there are no edges between $X, Y.$ We denote by $G[X,Y]$ the bipartite subgraph of $G$ with vertex set $X\cup Y$ and edge set the set of edges of $G$ between $X, Y.$ A {\it half-graph} is a bipartite graph with no induced two-edge matching. Take orderings $x_1, \ldots, x_m$ and $y_1,\ldots , y_n$ of $X$ and $Y$ respectively. We say $G[X, Y ]$ {\it obeys} these orderings if for all $ i, i',  j, j'$ with $1 \leq  i \leq  i'\leq  m$ and $1 \leq j \leq j'\leq n,$ if $x_{i'}y_{j'}$
	is an edge then $x_iy_j$ is an edge; or, equivalently, each vertex in $Y$ is adjacent to an initial segment of $x_1, \ldots , x_m,$ and each vertex in $X$ is adjacent to an initial segment of $y_1,\ldots, y_n.$ 
	
	If $X, Y, Z$ are disjoint cliques of $G$, we say that $G[X, Y ],$ $ G[X, Z]$ are compatible if $G[X, Y\cup Z]$ is a half-graph.
	If $Y, Z$ are anticomplete, then $G[X, Y ], G[X, Z]$ are compatible if and only if there is no induced four-vertex path in $G[X\cup Y \cup Z]$ with first vertex in $Y ,$ second and third in $X,$ and fourth in $Z.$

	\begin{definition}[\cite{CHPRSSTV2024}]\label{1}
		Let $G$ be a graph with vertex set partitioned into sets $W_1,\dots, W_{\ell},$ with the following properties:
		
		(1) $W_1, \dots, W_{\ell}$ are non-null cliques;
		
		(2) for $1\leq i \leq l,$  $G[W_{i-1}, W_i]$ is a half-graph (reading subscripts modulo $\ell$);
		
		(3) for all distinct $i, j \in \{1,\dots , \ell\},$ if there is an edge between $W_i, W_j$ then $j = i\pm1$ (modulo $\ell$);
		
		(4) for  $1\leq i \leq \ell,$  the graphs $G[W_{i}, W_{i+1}],$  $G[W_{i}, W_{i-1}]$ are compatible.
		
		We call such a graph a blow-up of an $\ell$-cycle.
	\end{definition}

	An {\it arborescence} is a tree with its edges directed in such a way that no two edges have a common head; or equivalently, such that for some vertex $r(T)$ (called the apex), every edge is directed away from $r(T).$ A leaf is a vertex different from the apex, with outdegree zero, and $L(T)$ denotes the set of leaves of the arborescence $T.$

	We first introduce the graph $\ell$-framework.  The cases of
	$\ell$ odd and $\ell$ even are different.
	
	\textbf{Case 1. $\ell$ is odd.}
   There are $k+1$ vertices $a_0, \ldots, a_k$ and $k$ vertices $b_1, \ldots, b_k$. For $1 \leq  i \leq k,$ there is a vertical path $P_i$ of length $\frac{\ell-3}{2}$ between $a_i$, $b_i$. The numbers $0,\dots, k$ break into two intervals $\{0,\ldots  , m\}$ and $\{m+1, \ldots, k\}$. 
	
	Each tent is meant to be an arborescence with the given apex and with set of leaves the
	base of the tent.
	Each of the  upper tents contains one vertex in $\{a_0, \ldots, a_m\}$ called its "apex", and obtains a nonempty interval of $\{a_{m+1}, \ldots, a_k\}$ called its "base". Each of $\{a_{m+1}, \ldots, a_k\}$ belongs to the base of an upper tent. The lower tents do the same with left and right switched. There can be any positive number of tents, but there must be a tent with apex $a_0.$  Possibly $m=0,$ and if so there are no lower tents. The way the upper and lower tents interleave is important; for each upper tent (except the innermost when there is an odd number of tents), the leftmost vertex of its base is some $a_i$, and $b_i$ is the apex of one of the lower tents; and for each lower tent (except the innermost when there is an even number of tents), the rightmost vertex of its base corresponds to the apex for one of the upper tents. 
	
	For each $i \in  \{1, \ldots , m\}$, if $a_{i-1}$ is the apex of an upper tent-arborescence $T_{i-1}$ say, there is a directed
	edge from some nonleaf vertex of $T_{i-1}$ (possibly from $a_{i-1}$) to $a_i$; and if $a_{i-1}$ is not the apex of a tent, there is a directed edge from $a_{i-1}$ to $a_i$. So all these upper tent-arborescences and all the vertices $a_0, \ldots , a_m,$ are connected up in a sequence to form one big arborescence $T$ with apex $a_0$ and
	with set of leaves either $\{a_{m+1}, \ldots  , a_k\}$ or $\{a_m, \ldots , a_k\}.$ There is a directed path of $T$ that contains
	$a_0, a_1,\dots, a_m$ in order, possibly containing other vertices of $T$ between them. Similarly for each $i \in \{m + 1, \ldots  , k-1\},$ if $b_{i+1}$ is an apex of a lower tent-arborescence $S_{i+1},$ there is a directed edge from some nonleaf vertex of $S_{i+1}$ to $b_{i}$, and otherwise there is a directed edge $b_{i+1}b_i$. So similarly
	the lower tent-arborescences, and the vertices $b_{m+1}, \ldots, b_k,$ are joined up to make one arborescence $S$ with apex $b_k$ and with set of leaves either $\{b_1, \ldots , b_m\}$ or $\{b_1, \ldots, b_{m+1}\}.$
	
	Thus this describes a graph in which some of the edges are directed: each directed edge belongs to one of two arborescences $T, S$ and each undirected edge belongs to one of the paths $P_i$. We call such a graph an $\ell$-framework.
	
	Let $T$ be an arborescence. For $v\in V (T)$, let $D_v$ be the set of
	all vertices $w\in L(T)$ for which there is a directed path of $T$ from $v$ to $w$. Let $S$ be a
	tree with $V(S) = L(T)$. We say that $T$ lives in $S$ if for each $v\in V(T)$, the set $D_v$ is the
	vertex set of a subtree of $S$. Let $T,T'$ be arborescences with $L(T) = L(T')$. We say they
	are coarboreal if there is a tree $S$ with $V (S) = L(T) = L(T')$ such that $T, T'$ both live
	in $S$.  Finally, let $T, T'$ be arborescences
	and let $\phi$ be a bijection from $L(T)$ onto $L(T')$. We say that $T, T'$ are coarboreal under $\phi$
	if identifying each vertex of $L(T)$ with its image under $\phi$ gives a coarboreal pair.
	
	\textbf{Case 2. $\ell$ is even.} We have vertices $a_0,  \ldots , a_k$ and $k$ vertices $b_1, \ldots , b_k$, but now there is an extra vertex $b_0.$ There are paths $P_i$ between $a_i, b_i$ of
	length $\ell/2-1$ for $1 \leq i \leq m,$ and length $\ell/2-2$ for $m+1 \leq i \leq k.$  There are upper and lower tents as before, but now all the tents have apex on the left. There must
	be an upper tent with apex $a_0,$ and one with apex $a_m,$ although $m = 0$ is permitted. The upper tents are paired with the lower tents; for each upper tent with base $\{a_i, \ldots , a_j\}$ there is also a lower
	tent with base $\{b_i, \ldots, b_j\},$ and vice versa. But the apexes shift by one; if an upper tent has apex $a_i$, the paired lower tent has apex $b_{i+1}$ (or $b_0$ when $i=m$). An important condition is:
	
	$\bullet$ For each upper tent-arborescence $T_i$ say, with apex $a_i$, the paired lower tent-arborescence $S_{i+1}$ with apex $b_{i+1}$ (or $b_0,$ if $i = m$) must be coarboreal with $T_i$ under the bijection that maps $a_j$ to $b_j$ for each leaf $a_j$ of $T_i$.
	
	As before, for each $i \in \{1,\ldots, m\},$ if $a_{i-1}$ is the apex of an upper tent-arborescence $T_{i-1}$ say, there is a directed edge from some nonleaf vertex of $T_{i-1}$ (possibly from $a_{i-1}$) to 
	$a_i$; and if $a_{i-1}$ is not the apex of a tent, there is a directed edge from $a_{i-1}$ to $a_i$. So the upper tent-arborescences
	are connected up to form an arborescence $T$ with apex $a_0,$ and with set of leaves $\{a_{m+1}, \ldots, a_k\}.$
	Also, for each $i \in \{1, \ldots, m -1\},$ if $b_{i+1}$ is the apex of a lower tent-arborescence $S_{i+1}$ say, there is a directed edge from some nonleaf vertex of $S_{i+1}$ (possibly from $b_{i+1}$) to $b_i$; and if $b_{i+1}$ is not the apex of a tent, there is a directed edge from $b_{i+1}$ to $b_i$. Finally, there is a directed edge from some
	nonleaf vertex of the tent-arborescence $S_0$ with apex $b_0$ (possibly from $b_0$ itself) to $b_m.$ So the lower tent-arborescences are connected up to form an arborescence $S$ with apex $b_0,$ and with set of leaves $\{b_{m+1}, \ldots, b_k\}.$ We call this graph an $\ell$-framework.
	
	The transitive closure $\overrightarrow{T}$ of an arborescence $T$ is the undirected graph with vertex set $V (T)$ in which vertices $u, v$ are adjacent if and only if some directed path of $T$ contains both of $u, v.$  Let $F$ be an $\ell$-framework (here, $\ell$ may be odd or even). Let $P_1,\dots, P_k, T, S$ and so on be as in the definition of
	an $\ell$-framework. Let $D =\overrightarrow{T}\cup \overrightarrow{S} \cup P_1 \cup \cdots \cup P_k.$ Thus $V (D) = V (F),$ and distinct $u, v \in  V (D) $ are $D$-adjacent if either they are adjacent in some $P_i,$ or there is a directed path of one of $S, T$ between
	$u, v.$ We say a graph $G$ is a blow-up of $F$ if
	
	(1) $D$ is an induced subgraph of $G,$ and for each $t\in  V (D)$ there is a clique $W_t$ of $G,$ all pairwise disjoint and with union $V (G);$ $W_t \cap V (D) = \{t\}$ for each $t \in V (D),$ and $W_t = \{t\}$ for each $t \in  V (D) \ V (P_1 \cup  \cdots \cup P_k).$
	
	(2) For each $t \in V (D),$ there is a linear ordering of $W_t$ with first term $t,$ say $(x_1,\dots, x_n)$ where
	$x_1 = t.$ It has the property that for all distinct $t, t' \in  V (D),$ if $t, t'$ are not $D$-adjacent then $W_t, W_{t'}$ are anticomplete, and if $t, t'$ are $D$-adjacent then $G[W_t, W_{t'}]$ obeys the orderings of $W_t, W_{t'}$, and every vertex of $G[W_t, W_{t'}]$ has positive degree. (Consequently, if $t, t'$ are $D$-adjacent then
	$t$ is complete to $W_{t'}$ and vice versa.)

	(3) If $t, t' \in  {a_1, \dots , a_k}$ or $t, t' \in {b_1,\dots, b_k},$ and $t, t'$ are $D$-adjacent, then $W_t$ is complete to $ W_{t'}.$
	
	(4) For each $t \in  V (T),$ if $0 \leq  i \leq m$ and $a_i
	, t$ are $D$-adjacent, then $W_t$ is complete to $W_{a_i}$.  For
	each $t \in V (S),$ if either $\ell$ is odd and $i \in {m + 1,\dots, k},$ or $\ell$  is even and $i \in  {0, \dots, m},$ and $b_i, t$ are $D$-adjacent, then $W_t$ is complete to $W_{b_i}$.
	
	(5) For each upper tent-arborescence $T_j$ with apex $a_j$ say, let $t \in L(T_j )$ and let the path $Q$ of $T$ from $a_0$ to $t$ have vertices
	$a_0=y_1 \cdots y_pa_j z_1  \cdots z_q=t$
	in order. Then $W_t$ is complete to $\{y_1, \dots, y_p, a_j\};$ $W_t$
	is anticomplete to $W(T\setminus V(Q))$; and $G[W_t, {z_1,\dots, z_{q-1}}]$ is a half-graph that obeys the given order of $W_t$ and the order $z_1,\dots, z_{q-1}$. The same holds for lower tent-arborescences with $T, a_0$ replaced by $S, b_0.$
	
	Recently, Cook et al. provided some characterizations for 
	$\ell$-hole graphs. We will use their results to complete our proof.
	\begin{lemma}
		(Cook et al. \cite{CHPRSSTV2024})\label{Main-Lem}
		Let $G$ be a graph with no clique cutset and no universal vertex, and let $\ell\geq 7$. Then $G$ is $\ell$-holed if and only if either $G$ is a blow-up of a cycle of length $\ell$, or $G$ is a blow-up of an $\ell$-framewark.
	\end{lemma}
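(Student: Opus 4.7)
The plan is to prove the two directions of this structural characterization, with very different levels of difficulty. For the forward direction, suppose $G$ is a blow-up of an $\ell$-cycle with cliques $W_1,\ldots,W_\ell$. I would take an arbitrary induced cycle $C$ of $G$ and use Property (3) to force the sequence of cliques visited by $C$ to be a cyclic walk on the $\ell$-cycle, while the half-graph and compatibility conditions (2) and (4) prevent $C$ from using two vertices of any single $W_i$. This gives $|C|=\ell$ directly. The blow-up-of-an-$\ell$-framework case is a longer verification in the same spirit: every hole is forced to use exactly one vertical path $P_i$, and the specified path lengths $(\ell-3)/2$ in the odd case, or $\ell/2-1$ and $\ell/2-2$ in the even case, together with the coarboreal condition in the even case, are calibrated so that the total hole length is always exactly $\ell$.

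For the reverse direction I would attempt a twin-based decomposition. Declare $u\sim v$ whenever $N[u]=N[v]$, argue that each equivalence class is a clique, and let $\widetilde{G}$ be the quotient graph. Under the no-clique-cutset and no-universal-vertex hypotheses, $\widetilde{G}$ is itself $\ell$-holed and essentially ``thin'', so the problem reduces to identifying $\widetilde{G}$ with either an $\ell$-cycle or an $\ell$-framework. I would start from any fixed hole $H$ of length $\ell$ and proceed by iterative attachment: for each vertex $v \notin V(H)$, analyse the holes through $v$ that overlap with $H$, showing that $v$ either extends the cyclic pattern (pushing toward the blow-up-of-a-cycle case) or forces a tent-arborescence pattern (pushing toward the framework case). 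The assumption $\ell \geq 7$ leaves enough ``space'' in each hole that the shared subpaths between two overlapping holes are long enough to pin down the structure uniquely.

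The main obstacle is the reverse direction, and specifically the rigidity argument that rules out hybrid structures. The essential technical claim is a dichotomy on how two holes of length $\ell$ can meet: if they share a long subpath, the extra vertices must be twins of existing ones, while if they share only a short subpath they generate a tent-arborescence whose leaves inherit compatibility constraints. The parity of $\ell$ enters through whether a hole has a unique midpoint to ``turn around'' at, which accounts for the asymmetry between the odd and even framework constructions and the need for the extra vertex $b_0$ in the even case. Since Cook et al. establish this characterization in full detail, the pragmatic plan for the present paper is to invoke it as a black box and devote the actual work to combining it with the combinatorics of longest cycles to verify Harvey's conjecture on $\ell$-holed graphs in the subsequent theorem.
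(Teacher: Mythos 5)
The paper does not prove this lemma at all: it is quoted verbatim as a result of Cook et al.\ \cite{CHPRSSTV2024} and used as a black box, which is exactly the pragmatic course you arrive at in your final paragraph, so your treatment matches the paper's. Be aware, though, that your two-paragraph sketch is not a substitute for the actual proof --- the reverse direction is the main structural theorem of a long paper, and steps such as ``the problem reduces to identifying $\widetilde{G}$ with either an $\ell$-cycle or an $\ell$-framework'' and the hole-overlap dichotomy are precisely where the real (and lengthy) work lies --- so in a write-up you should present it only as motivation accompanying the citation, not as an argument.
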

	
	
The following lemma is well-known and easy to verify.
	
	\begin{lemma}\label{Main-Lem-chordal-edge}
		Let $e$ be an edge of a cycle $C$ in a chordal graph. Then $e$ forms a triangle with a third vertex of $C$.
	\end{lemma}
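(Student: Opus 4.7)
The plan is to prove this by induction on the length $|C|$ of the cycle. Writing $e = uv$, we want to find a vertex $w \in V(C) \setminus \{u, v\}$ with $uw, vw \in E(G)$.

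For the base case $|C| = 3$, the cycle $C$ itself is a triangle, so its unique third vertex serves as $w$. For the inductive step, assume $|C| \geq 4$ and that the statement holds for all shorter cycles. Since $G$ is chordal, the cycle $C$ has a chord $xy$. This chord partitions the edges of $C$ into two $x$--$y$ paths of lengths $a$ and $b$ with $a + b = |C|$ and $a, b \geq 2$ (since $xy$ is a chord, not an edge of $C$). Adding the chord to each path yields two cycles $C_1, C_2$ of lengths $a+1$ and $b+1$, each satisfying $3 \leq |C_i| \leq |C|-1$.

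The edge $e$ lies on exactly one of the two paths, so it is an edge of exactly one of $C_1, C_2$, say $C_1$. Applying the inductive hypothesis to $e$ and $C_1$ in $G$ (which is still chordal), we obtain a vertex $w \in V(C_1) \setminus \{u, v\}$ that forms a triangle with $e$. Since $V(C_1) \subseteq V(C)$, this $w$ is the desired third vertex on $C$.

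There is no substantive obstacle here; the only point that needs care is verifying that the subcycles produced by the chord have length strictly less than $|C|$ and at least $3$, which follows immediately from the fact that both sides of the chord contain at least two edges of $C$.
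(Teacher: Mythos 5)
Your proof is correct. The paper itself gives no argument for this lemma (it is stated as ``well-known and easy to verify''), so there is nothing to compare against; your induction on the cycle length is the standard way to verify it, and all the delicate points are handled properly --- in particular you correctly note that both arcs of $C$ determined by the chord contain at least two edges, so both subcycles have length between $3$ and $|C|-1$, that $e$ lies on exactly one of them since $e\in E(C)$ while the chord is not, and that the vertex $w$ produced by the inductive hypothesis lies in $V(C_1)\subseteq V(C)$ and is distinct from $u,v$.
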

	
	Now we are ready to state and prove the main result.
	\begin{theorem}\label{Main-Thm}
		Let $G$ be a  non-hamiltonian $2$-connected graph with $\delta(G)\geq 3$. If $G$ is $\ell$-holed, then $\ell+2\leq c(G)$.
	\end{theorem}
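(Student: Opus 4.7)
The plan is to split the argument by the value of $\ell$. If $\ell\in\{4,5,6\}$ then, since $G$ has at least one hole and every hole of $G$ has length exactly $\ell$, we have $c'(G)=\ell\leq 6$, and Theorem~\ref{Thm-small-hole} directly gives $c(G)\geq c'(G)+2=\ell+2$. Hence we may assume $\ell\geq 7$, so that the structural description of Lemma~\ref{Main-Lem} becomes applicable as soon as $G$ has no clique cutset and no universal vertex.

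We would first dispose of the universal vertex case. Let $v$ be a universal vertex of $G$ and $C=v_1v_2\cdots v_\ell v_1$ be a hole. Because $C$ is chordless and $v$ is adjacent to every other vertex, $v\notin V(C)$. Since $G$ is non-hamiltonian, there exists $u\in V(G)\setminus(V(C)\cup\{v\})$; $2$-connectivity implies that $G-v$ is connected and contains a shortest $(u,V(C))$-path $P$ of length $k\geq 1$ ending at some $v_i$. Then
\[
u,\,P,\,v_i,\,v_{i+1},\,\ldots,\,v_{i-1},\,v,\,u
\]
is a cycle of length $k+(\ell-1)+1+1\geq \ell+2$. Next suppose $G$ has a clique cutset $X$. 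Because a hole meets any clique in at most two vertices, and the edges of a hole cannot cross $X$ between distinct components of $G-X$, the entire hole $C$ lies in $G_H:=G[V(H)\cup X]$ for a single component $H$ of $G-X$. We then pass to a $2$-connected subgraph of $G_H$ that retains the hole $C$, the degree condition $\delta\geq 3$, and non-hamiltonicity, and conclude by a minimum counterexample argument on $|V(G)|$.

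With the reduction in place, Lemma~\ref{Main-Lem} applies and $G$ is a blow-up either of $C_\ell$ or of an $\ell$-framework. In the $C_\ell$-blow-up case, the degree condition $\delta(G)\geq 3$ forces $|W_{i-1}|+|W_i|+|W_{i+1}|\geq 4$ for every $i$, so the classes $W_i$ cannot all be singletons; together with non-hamiltonicity, this provides enough slack to reroute the spine hole through two or three vertices of a single thick class $W_i$, using the clique edges inside $W_i$ and the half-graph edges to $W_{i\pm 1}$. The compatibility condition~(4) of Definition~\ref{1} ensures that the rerouting closes into a cycle of length $\ell+2$. The framework case is the main obstacle: the upper and lower tent-arborescences, the coarboreal matching between paired tents when $\ell$ is even, and the clique blow-ups at individual spine vertices must all be tracked simultaneously, and the parity of $\ell$ splits the construction into two largely parallel but distinct sub-arguments. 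The key structural observation to exploit is that any non-trivial tent provides a \emph{side excursion} that can be spliced into the spine hole at the cost of at least two extra vertices, which is exactly what is needed; the bulk of the technical work will lie in verifying this splicing across all sub-configurations (trivial tents, varying apex positions, and tent-arborescence branching).
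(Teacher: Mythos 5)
Your skeleton matches the paper's (small $\ell$ via Theorem~\ref{Thm-small-hole}, then Lemma~\ref{Main-Lem} for $\ell\geq 7$ with the four cases), and your universal-vertex argument is correct and even a little cleaner than the paper's (which subsumes it under a claim that a minimum counterexample is $\ell$-wheel-free). But the three substantive cases are not proved, and in at least one of them the strategy you sketch would fail.

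First, the clique cutset case. You propose to ``pass to a $2$-connected subgraph of $G_H$ that retains the hole, the degree condition $\delta\geq 3$, and non-hamiltonicity'' and then induct. No such subgraph exists in general: when $|X|=2$, a vertex of $X$ may have only one neighbour in $H$, so $\delta(G_H)=2$, and $G_H$ can perfectly well be hamiltonian even though $G$ is not, so neither hypothesis of the theorem is inherited. The paper has to work here: it shows a minimum clique cutset $S$ has $|S|=2$ (the case $|S|\geq 3$ is where the minimality/induction argument actually applies, and even there the hamiltonian outcome must be excluded by showing $G'$ would be an $\ell$-wheel), then shows $S$ is disjoint from the hole, and finally derives a contradiction from two disjoint $(S,V(C))$-paths by observing that the short cycle they create spans a \emph{chordal} subgraph and invoking Lemma~\ref{Main-Lem-chordal-edge} to produce a forbidden long cycle. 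None of this is in your proposal. Second, the blow-up of $C_\ell$: you miss the simple observation that such a graph is hamiltonian, so this case is vacuous under the hypothesis; your proposed ``rerouting through a thick class'' is unnecessary and, as written, not carried out (and the inference $\delta\geq 3\Rightarrow$ some $W_i$ is non-singleton does not by itself produce a cycle of length $\ell+2$). Third, the framework case you explicitly defer as ``the main obstacle,'' but it is where the paper does concrete work: using $2$-connectivity to get $k\geq 2$ vertical paths, taking the hole through $P_k$ and $P_{k-1}$, and using $\delta(G)\geq 3$ to enlarge two of the blow-up cliques $W_{a_k'},W_{a_{k-1}'}$ and splice two extra vertices into that hole. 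Your ``tent side-excursion'' idea is a different (unverified) mechanism. As it stands the proposal is an outline with the hard steps missing.
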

	\begin{proof}[\bf{Proof.}]
		
		If $\ell\leq 6$, then $c(G)\geq \ell+2$ by Theorem \ref{Thm-small-hole}.
		Let $\ell\geq 7,$ by Lemma \ref{Main-Lem} we may assume that $G$ admits a clique cutset, or a universal vertex, or $G$ is a blow-up of a cycle of length $\ell$ or $G$ is a blow-up of an $\ell$-framewark. By the definitions of the blow-up of a cycle of length $\ell$, we deduced that it is hamitonian.
  Suppose  $G$  is a blow-up of an $\ell$-framewark. Since $G$ is $2$-connected, we have that the number of vertical path $P_i$ is at least two, i.e., $k\ge 2$, otherwise $a_k$ is a cut vertex.  
		
		Suppose that $\ell$ is odd. 
		By the definition of blow-up of $\ell$-framework, $a_0a_kP_kb_kb_{k-1}\allowbreak P_{k-1}a_{k-1}a_0$ is a cycle with length $\ell$. Moreover, as $\ell\ge 7$,  the length of $P_i$ is at least $2$. Let $a_i'$ be the unique neighbor of $a_i$ in $P_i$ for $1\le i\le k$. Then as $\delta(G)\ge 3$, there exists a clique $W_{a_i'}$ with size at least two containing $a_i'$. Let $a_i''$ be a vertex in $W_{a_i'}$. By the definition of blow-up of $\ell$-framework, we have $a_ia_i''\in E(G)$. Therefore,
		\[
		a_0a_ka_k''a_k'P_k[a_k',b_k]b_kb_{k-1}P_{k-1}[b_{k-1},a_{k-1}']a_{k-1}'a_{k-1}''a_{k-1}a_0
		\]
		is a cycle with length $\ell+2$.
		
		Suppose that $\ell$ is even. 
		By the definition of blow-up of $\ell$-framework, $a_0a_kP_kb_kb_0b_{k-1}P_{k-1}a_{k-1}a_0$ is a cycle with length at least $\ell$. Moreover, as $\ell\ge 7$,  the length of $P_i$ is at least $2$. Let $a_i'$ be the unique neighbor of $a_i$ in $P_i$ for $1\le i\le k$. Then as $\delta(G)\ge 3$, there exists a clique $W_{a_i'}$ with size at least two containing $a_i'$. Let $a_i''$ be a vertex in $W_{a_i'}$. By the definition of blow-up of $\ell$-framework, we have $a_ia_i''\in E(G)$. Therefore,
		\[
		a_0a_ka_k''a_k'P_k[a_k',b_k]b_kb_0b_{k-1}P_{k-1}[b_{k-1},a_{k-1}']a_{k-1}'a_{k-1}''a_{k-1}a_0
		\]
		is a cycle with length at least $\ell+2$.
		 Next, We will consider the remaining situations. Let $G$ be a counterexample to Theorem \ref{Main-Thm} with minimum order. 
		
		\begin{claim}\label{Main-Claim-wheel}
			$G$ is $\ell$-wheel-free.
		\end{claim}
		To the contrary, we may assume that $G$ contains an $\ell$-wheel $W$ as a subgraph. 
		Since $G$ is non-hamiltonian, $V(G)\setminus V(W)\neq \emptyset$. Let $v\in V(G)\setminus V(W)$. Then there exist two $(v,V(W))$-paths $Q_1,Q_2$ such that $V(Q_1)\cap V(Q_2)= \emptyset$, as $G$ is $2$-connected.
		It is not hard to find a cycle with length at least $\ell+2$ in $V(W)\cup V(Q_1)\cup V(Q_2)$, a contradiction.
		
		By Claim \ref{Main-Claim-wheel}, we only need to consider the  remaining case that $G$ admits  a clique cutset. Let $S$ be a minimum clique cutset, and let $H_1,\ldots, H_t$  be the components of $G-S$. Furthermore, Let $H_1$ be a component of $G-S$ such that $G[V(H_1)\cup S]$ contains an $\ell$-hole. Furthermore, let $G'=G[V(H_1)\cup S]$.
		
		\begin{claim}\label{Main-Claim-Clique-cutset}
			$|S|=2$.
		\end{claim}
		Since $G$ is $2$-connected, we have $|S|\geq 2$. Suppose that $|S|\geq 3$. Since $S$ is a minimum clique cutset and $|S|\geq 3$, we have $\delta(G')\geq 3$. Note that $G'$ is $2$-connected with fewer vertices than $G$. If $G'$ is non-hamiltonian, then $c(G')\geq \ell+2$. This implies that $c(G)\geq c(G')\geq \ell+2$, a contradiction. Thus, $G'$ is hamiltonian and $c(G')\leq \ell+1$. Since $G'$ contains an $\ell$-hole, $G'$ is an $\ell$-wheel. This implies that $G$ contains an $\ell$-wheel, which contradicts with Claim \ref{Main-Claim-wheel}. This proves Claim \ref{Main-Claim-Clique-cutset}.
		
		For convenience, we may assume that $S=\{u_1,u_2\}$. Let $C=v_1,v_2,\ldots,v_{\ell},v_1 $ be a $\ell$-hole in $G'$.
		\begin{claim}\label{Main-Claim-common}
			$V(C)\cap \{u_1,u_2\}=\emptyset.$ 
		\end{claim}
		Otherwise,we may up to symmetry assume that $u_1\in V(C)$. If $u_2\in V(G)$, then $u_1u_2\in E(C)$. Since $G$ is $2$-connected and $\delta(G)\geq 3$, there exists a $(u_1,u_2)$-path $P$ with length at least $3$ in $G[V(H_2)\cup\{u_1,u_2\}]$. However, it follows that $G[V(C)\cup V(P)]$ is a cycle with length at least $\ell+2$, a contradiction. Hence, $u_2\notin V(C)$. Let $P'$ be a $(u_2,V(C))$-path of $G'$ such that $u_1\notin V(P')$. Since $G$ is $2$-connected, such a path $P'$ exists. For convenience, we may assume that $u_1=v_1$,  $v_j$ is an endpoint of $P$ and $j\leq \frac{\ell+1}{2}$. Choose  $P'$ with $j$  minimum. If $j\leq 3$, then $v_1,P,u_2,P',v_j,v_{j+1},\ldots,v_{\ell},v_{1}$ is a cycle with length at least $\ell+2$, a contradiction. Thus, $j\geq 4$. To forbid $v_1,v_2,\ldots,v_j,P',u_2,P,v_1$ is a cycle with length $\ell +2$, we have that $v_1,v_2,\ldots,v_j,P',u_2,v_1$ has length at most $\ell-3$. This implies that $G[V(P')\cup \{v_1,\ldots, v_j\}]$ is a chordal graph.   By Lemma \ref{Main-Lem-chordal-edge}, the edge $v_1v_2$ forms a triangle with third vertex in $\{v_1,\ldots,v_j\}\cup V(P')$. However, it follows that $N_G(v_2)\cap V(P')\neq \emptyset$, which contradicts with the choice of $j$. This proves that Claim \ref{Main-Claim-common}.
		
		Let $P_1,P_2$ be two disjoint  $(S,V(C))$-paths. Without loss of generality, we may assume that $V(P_1)\cap V(C)=\{v_1\}$ and $V(P_2)\cap V(C)=\{v_j\}$. By symmetry, suppose that $j\leq \frac{\ell+1}{2}$. Furthermore, we may assume that $P_1,P_2$ are two that minimize $|V(P_1)|+|V(P_2)|$ prior to minimizing $j$. By the previous analysis, we have that $j\geq 4$ and $v_1,v_2,\ldots,v_j,P_2,u_2,u_1,P_1,v_1 $ is a cycle with length at most $\ell-1$. This implies that $G[V(P_1)\cup V(P_2)\cup \{v_1,\ldots, v_j\}]$ is a chordal graph.   By Lemma \ref{Main-Lem-chordal-edge}, the edge $v_{j-1}v_j$ forms a triangle with third vertex in $\{v_1,\ldots,v_{j-2}\}\cup V(P_2)\cup V(P_1)$. By the choices of $j$ and $C$ is a chordless cycle, we have $N_G(v_{j-1})\cap V(P_1)\neq \emptyset$, say $x$, and hence
    \[
  v_{j-1},x,P_1[x,u_1],u_1,u_2,P_2,v_j,C[v_j,v_{j-1}],v_{j-1}
     \]
  is a cycle with length at least $\ell+2$, a contradiction.
  
  This completes the proof of Theorem \ref{Main-Thm}.
	\end{proof}

\section*{Acknowledgement}  The authors are grateful to Professor Xingzhi Zhan for his constant support and guidance. This research  was supported by the NSFC grant 12271170 and Science and Technology Commission of Shanghai Municipality (STCSM) grant 22DZ2229014.

	\section*{Declaration}
	
	
	\noindent$\textbf{Conflict~of~interest}$
	The authors declare that they have no known competing financial interests or personal relationships that could have appeared to influence the work reported in this paper.
	
	\noindent$\textbf{Data~availability}$
	Data sharing not applicable to this paper as no datasets were generated or analysed during the current study.

\end{document}